\newtheorem{theorem}{Theorem}[section]
\newtheorem{lemma}[theorem]{Lemma}
\newtheorem{corollary}[theorem]{Corollary}
\begin{document}

%


\begin{center} {\Large{\bf On the interpolation property and dominated decomposition property of quasimartingales}}\\

\bigskip

\today

\bigskip

\large {Liang Hong}
\footnote{Liang Hong is an Assistant Professor in the Department of Mathematics, Robert Morris University, 6001 University Boulevard, Moon Township, PA 15108, USA. Tel: 412-397-4024. Email address: hong@rmu.edu.}

\end{center}

\vskip 10pt
\bigskip

\centerline{\noindent {\bf Abstract}} \noindent For a quasimartingale majorized by another quasimartingale,  it is natural to ask whether a third quasimartingale can be inserted between them. In this paper, we give an affirmative answer to this problem. We also establish a dominated decomposition property of quasimartingales. In addition, we show that a weak interpolation property holds for supermartingales and local supermartingales. Our approach also yields the interpolation property and dominated decomposition property for Markov chains.

\bigskip

\noindent
{\it AMS 2010 Classification:} Primary 60G05; 60G07; 60G44. \\
\bigskip

\noindent {\it Keywords:} Supermartingale; local supermartingale; quasimartingale; Markov chain; interpolation property; dominated decomposition property; Riesz space; functional analysis \\

\vskip 15 pt

\newpage
\section {Introduction}
Quasimartingale is an important class of stochastic processes. It has been extensively studied in the literature of probability. Fisk (1965) first coined the term ``quasimartingale''. Rao (1969) gave the well-known Rao decomposition of quasimartingales. F$\ddot{o}$llmer (1972, 1973) introduced the F$\ddot{o}$llmer measure and showed that a quasimartingale can be represented by a finite signed measures on the $\sigma$-field of predictable sets. Pop-Stojanovic (1972) studied weak Banach-valued quasi-martingales. Letta (1979) proposed a new definition of quasimartingales using stochastic variation. Stricker (1975) gave an integral criterion which is equivalent to the definition of quasimartingales. Jain and Monrad (1982) decomposed the paths of a Gaussian quasimartingale into a martingale and a predictable process of bounded variation. Brooks and Dinculeanu (1988) gave a Doob-Meyer decomposition for Banach-valued quasimartingales.
Vinh (2003) classified quasimartingales in the limit. Brooks et al (2005) gave several decomposition of weak quasimartingales. Cassese (2012) extended the Rao decomposition to quasi-martingales indexed by a linearly order set. The basic quasimartingale theory can be found in Dellacherie and Meyer (1982), He et al (1992), Jacod and Shiryaev (2003) and Protter (2005).


In this paper, we consider the following problem. Suppose $X$ and $Y$ are two quasimartingales and $X\leq Y$. Is it always possible to find a quasimartingale $Z$ such that $X\leq Z\leq Y$? We settle this problem affirmatively. We also give a dominated decomposition property of quasimartingales. Since quasimartingales and supermartingales are closely related through Rao decomposition, we also investigate the same problem for supermartingales. We show that the classes of supermartingales and local supermartingales both satisfy a weak interpolation property. The main tool employed in our work is the theory of Riesz spaces, a branch of functional analysis. The theory of Riesz spaces may lead to surprising elegant solution to seemingly complicated problem. For example, the elegant proof of Lebesgue decomposition theorem in Yosida and Hewitt (1952) is based on the simple fact that every band in a Dedekind complete Riesz space is a projection band. Our work was partially inspired by theirs. There are two obvious advantages of using theory of Riesz spaces to study the interpolation property problem under consideration: (1) It leads to a simple and clean proof. (2) It illustrates the lattice structure of the space under consideration.
H$\ddot{u}$rzeler (1984) extended real-valued quasimartingales to the ones with values in a Banach space. Recent developments along this line can be found in Kuo et al (2004, 2006) and Grobler (2010, 2011). We would like to point out that our work is not in that direction.

The remainder of the paper is organized as follows. Section 2 gives notations and some essential concepts in Riesz spaces. Section 3 documents the main results. Section 4 concludes the paper with a discussion.

\section{Notations and Setup}
Our discussion will always be based on a filtered probability space $(\Omega, \mathcal{F}, (\mathcal{F}_t)_{t\geq 0}, P)$. All relevant mathematical objects such as stopping times, semimartingales, quasimartingales are assumed to be defined on this space. The filtration $(\mathcal{F}_t)_{t\geq 0}$ is assumed to satisfy the usual conditions. We will often omit the reference filtration $(\mathcal{F}_t)_{t\geq 0}$ when no confusion may arise. All vector spaces are real. All the stochastic processes are assumed to be c$\acute{a}$dl$\acute{a}$g, i.e., right-continuous with left-hand limits. We equip all the spaces of stochastic processes with the usual order, that is, for two stochastic processes $X$ and $Y$ we define $X\leq Y$ if and only if $X_t\leq Y_t$ for all $t\geq 0$, where $X_t\leq Y_t$ is understood to be almost surely. Here and throughout, the following notations will be used.
\begin{enumerate}
  \item[]$\mathcal{C}=$ the space of all nonnegative supermartingales.
  \item[]$\mathcal{K}=$ the space of all supermartingale.
  \item[]$\mathcal{Q}=$ the space of all quasimartingales.
\end{enumerate}

Next, we give some basic concepts in the theory of Riesz spaces. For further details, readers may consult Luxemburg and Zaanen (1971), Schaefer (1974), Aliprantis and Burkinshaw (1985, 2000), Zaanen (1997) and Aliprantis and Tourky (2007).

A partially ordered set $X$ is called a \emph{lattice} if the infimum and supremum of any pair of elements in $X$ exist. A nonempty subset $\mathcal{W}$ of a vector space $X$ is called a \emph{wedge} \footnote{Some authors call this a convex cone.} if it is closed under addition and multiplication of nonnegative scalars, that is,
\begin{enumerate}
  \item [(a)]$\mathcal{W}+\mathcal{W}\subset W$,
  \item [(b)]$\lambda \mathcal{W}\subset \mathcal{W}$ for all $\lambda>0$.
\end{enumerate}
A wedge $\mathcal{C}$ is called a \emph{cone}\footnote{Some authors call this a pointed convex cone (with vertex at zero).} if $\mathcal{C}\cap (-\mathcal{C})=\{0\}$, where $-\mathcal{C}=\{-x\mid x\in \mathcal{C}\}$. A cone $\mathcal{C}$ is said to be \emph{generating} in a vector space $X$ if $X=\mathcal{C}-\mathcal{C}$, where $\mathcal{C}-\mathcal{C}=\{x-y\mid x, y\in \mathcal{C}\}$. A real vector space $X$ is called an \emph{ordered vector space} if its vector space structure is compatible with the order structure in a manner such that
\begin{enumerate}
  \item [(a)]if $x\leq y$, then $x+z\leq y+z$ for any $z\in X$;
  \item [(b)]if $x\leq y$, then $\alpha x\leq \alpha y$ for all $\alpha\geq 0$.
\end{enumerate}
If $X$ is an ordered vector space, then $X+=\{x\in X\mid x\geq 0\}$ is called the \emph{positive/standard cone} of $X$. 
An ordered vector space $X$ is said to satisfy the \emph{interpolation property} if for every pair of nonempty finite subsets $E$ and $F$ of $X$ there exists a vector $x\in X$ such that $E\leq x\leq F$.
An ordered vector space is called a \emph{Riesz space} (or a \emph{vector lattice}) if it is also a lattice at the same time. A vector subspace $V$ of a Riesz space $L$ is called a \emph{Riesz subspace} if for any $x, y\in V$ the supremum $x\vee y$ belongs to $V$.

\section{The main results}
\subsection{The interpolation property and dominated decomposition property of quasimartingales}
First, we show that the class of quasimartingales satisfies the interpolation property.
Recall that the \emph{variation} of a process $X$, denoted by Var$(X)$, is defined as
\begin{equation*}
Var(X)=\sup_{\pi}\left[\sum_{i=0}^{n-1} E\left[|X_{t_i}-E[X_{t_{i+1}}\mid \mathcal{F}_{t_i}]|\right]+E|X_{t_n}|\right],
\end{equation*}
where $\pi: 0=t_0< t_1<...<t_n<\infty$ is a finite partition of $[0, \infty)$. A process $X$ is called a \emph{quasimartingale} if $X_t$ is integrable for all $t\geq 0$ and $X$ has finite variation. It seems to be a formidable challenge to show that $\mathcal{Q}$ is a Riesz space using the above definition. Rao (1969) showed that an adapted process is a quasimartingale if and only if $X=X^1-X^2$, where $X^1$ and $X^2$ are both nonnegative supermartingales. This decomposition of quasimartingales is often referred to as \emph{Rao decomposition}. We will use Rao decomposition to show that the space $\mathcal{Q}$ is a Riesz space. To this end, recall that $\mathcal{C}$ represents the space of all nonnegative supermartingales, i.e. ,
\begin{eqnarray*}
\mathcal{C} = \{X \mid X \text{\ is a nonnegative supermartingales}\}.
\end{eqnarray*}
Clearly, $\mathcal{C}$ is closed under algebraic addition and multiplication of nonnegative scalars. Hence, $\mathcal{C}$ is a wedge in $\mathcal{Q}$. However, it is not a cone. 
Moreover, $\mathcal{C}$ is not a vector space. Therefore, we first focus on the vector space $\langle\mathcal{C}\rangle$ generated by $\mathcal{C}$. From Rao decomposition, it is easy to see that $\mathcal{C}$ is a generating wedge in $\mathcal{Q}$, that is,
\begin{eqnarray*}
\mathcal{Q} &=& \mathcal{C}-\mathcal{C}=\langle \mathcal{C}\rangle \\
            &=& \{X-Y\mid \text{ $X$ and $Y$ are nonnegative supermartingales\} }.
\end{eqnarray*}

\begin{lemma}\label{lemma3.1}
$\mathcal{Q}$ is a Riesz space.
\end{lemma}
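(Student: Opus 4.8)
The plan is to exhibit the lattice operations on $\mathcal{Q}$ explicitly and to verify that they coincide with the pointwise ones, the crux being that the pointwise maximum and minimum of two quasimartingales must be shown to stay inside $\mathcal{Q}$. First I would record the routine fact that $\mathcal{Q}$, equipped with the usual (pointwise, a.s.) order, is an ordered vector space: antisymmetry holds once indistinguishable c\`adl\`ag processes are identified, and compatibility of the order with the vector operations is immediate. By Rao decomposition we already know $\mathcal{Q}=\mathcal{C}-\mathcal{C}=\langle\mathcal{C}\rangle$ is a vector space, so the only thing left is to produce $X\vee Y$ and $X\wedge Y$ in $\mathcal{Q}$ for arbitrary $X,Y\in\mathcal{Q}$.

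The key step is the claim: if $U,V\in\mathcal{C}$, then the pointwise minimum $U\wedge V$ again belongs to $\mathcal{C}$. Indeed $U\wedge V$ is nonnegative, c\`adl\`ag and adapted, it is dominated by the integrable $U_t$, and for $s\le t$ conditional monotonicity gives $E[(U\wedge V)_t\mid\mathcal{F}_s]\le E[U_t\mid\mathcal{F}_s]\le U_s$ and likewise $\le V_s$, hence $\le (U\wedge V)_s$; so $U\wedge V$ is a nonnegative supermartingale.

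From this I would deduce that $\mathcal{Q}$ is closed under taking positive parts, which in an ordered vector space is enough to obtain all lattice operations. Given $X\in\mathcal{Q}$, write $X=X^1-X^2$ with $X^1,X^2\in\mathcal{C}$ via Rao decomposition. Then the pointwise positive part satisfies $X^+=X^1-(X^1\wedge X^2)$, a difference of elements of $\mathcal{C}$ by the claim, hence $X^+\in\mathcal{Q}$. Moreover $X^+$ really is the supremum of $\{X,0\}$ in $\mathcal{Q}$: it dominates both $X$ and $0$, and any $Z\in\mathcal{Q}$ with $Z\ge X$ and $Z\ge 0$ satisfies $Z_t\ge\max(X_t,0)=X_t^+$ a.s. for every $t$. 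Consequently, for general $X,Y\in\mathcal{Q}$ the process $X\vee Y:=(X-Y)^+ + Y$ lies in $\mathcal{Q}$ and is the least upper bound of $\{X,Y\}$ there, while $X\wedge Y:=X+Y-X\vee Y$ is the greatest lower bound; so $\mathcal{Q}$ is a lattice and therefore a Riesz space.

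I do not anticipate a serious obstacle: once Rao decomposition is invoked, the whole argument hinges on the elementary observation that the minimum of two nonnegative supermartingales is again a nonnegative supermartingale, together with the bookkeeping identifying the pointwise operations with the order-theoretic ones. The one place to be slightly careful is the passage from ``the pointwise maximum lies in $\mathcal{Q}$'' to ``the pointwise maximum is the supremum in the ordered vector space $\mathcal{Q}$,'' which uses that the order on $\mathcal{Q}$ is inherited from the pointwise order on the larger space of all adapted c\`adl\`ag processes.
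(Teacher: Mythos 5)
Your proposal is correct and takes essentially the same approach as the paper: both arguments rest on Rao decomposition together with the key fact that the pointwise minimum of two nonnegative supermartingales is again a nonnegative supermartingale. The only difference is one of bookkeeping --- you reduce everything to closure under positive parts via $X^{+}=X^{1}-(X^{1}\wedge X^{2})$ and then set $X\vee Y=(X-Y)^{+}+Y$, whereas the paper computes the infimum directly from the identity $(X^1-Y^1)\wedge(X^2-Y^2)=(X^1+Y^2)\wedge(Y^1+X^2)-(Y^1+Y^2)$; your version also spells out the verification (c\`adl\`ag, integrability, conditional monotonicity, and that the pointwise supremum is the order-theoretic one) that the paper leaves implicit.
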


\begin{proof}
It is trivial that $\mathcal{Q}$ is a vector space and the usual order is a compatible partial order on $\mathcal{Q}$, that is, $\mathcal{Q}$ is an ordered vector space. Thus, we only need to show that $\mathcal{Q}$ is closed under the lattice operation $\wedge$.
To this end, let $X^1-Y^1$ and $X^2-Y^2$ be two quasimartingales, where $X^1, Y^1, X^2$ and $Y^2$ are all nonnegative supermartingales. We have
\begin{eqnarray*}
(X^1-Y^1)\wedge (X^2-Y^2) &=& -Y^2+(X^1-Y^1+Y^2)\wedge X^2 \\
        &=&(X^1+Y^2)\wedge (Y^1+X^2) -(Y^1+Y^2).
\end{eqnarray*}
Clearly, $X^1+Y^2, Y^1+X^2$ and $Y^1+Y^2$ are all nonnegative supermartingales. Hence, $(X^1+Y^2)\wedge(Y^1+X^2)$ is a nonnegative supermartingale too. By the Rao decompositsion, $(X^1-Y^1)\wedge(X^2-Y^2)$ is quaimartingale.
\end{proof}

\begin{theorem}[Interpolation property of quasimartingales]\label{theorem3.1}
$\mathcal{Q}$ satisfies the interpolation property. In particular, for two quasimartingales $X$ and $Y$ with $X\leq Y$, there is a quasimartingale $Z$ such that $X\leq Z\leq Y$.
\end{theorem}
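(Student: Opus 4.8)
The plan is to obtain the interpolation property as an immediate consequence of the lattice structure of $\mathcal{Q}$ established in Lemma \ref{lemma3.1}. Every Riesz space satisfies the interpolation property, and the verification is a short argument with finite suprema; so the substantive work has already been done in Lemma \ref{lemma3.1}, where Rao decomposition was used to show that $\mathcal{Q}$ is closed under $\wedge$ (equivalently, under $\vee$). Here it only remains to assemble the pieces.

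Concretely, I would argue as follows. Let $E=\{X^1,\dots,X^m\}$ and $F=\{Y^1,\dots,Y^n\}$ be nonempty finite subsets of $\mathcal{Q}$ with $E\leq F$, that is, $X^i\leq Y^j$ for all $i$ and $j$. Since $\mathcal{Q}$ is a Riesz space, the finite supremum
\[
Z=X^1\vee X^2\vee\cdots\vee X^m
\]
exists in $\mathcal{Q}$, hence is again a quasimartingale. From the definition of supremum, $X^i\leq Z$ for every $i$, so $E\leq Z$. Conversely, fix $Y^j\in F$; because $X^i\leq Y^j$ for all $i$, the quasimartingale $Y^j$ is an upper bound of $\{X^1,\dots,X^m\}$, and therefore $Z=\bigvee_i X^i\leq Y^j$. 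Since $j$ is arbitrary, $Z\leq F$. Thus $E\leq Z\leq F$, which is the interpolation property. The final assertion is the special case $m=n=1$: for quasimartingales $X\leq Y$ one may simply take $Z=X$ (equivalently $Z=X\vee X$), which trivially satisfies $X\leq Z\leq Y$.

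I do not expect any genuine difficulty in this step. The only nontrivial ingredient is that $\mathcal{Q}$ is a lattice, which is exactly Lemma \ref{lemma3.1}, whose proof in turn rests on Rao decomposition, the identity $(X^1-Y^1)\wedge(X^2-Y^2)=(X^1+Y^2)\wedge(Y^1+X^2)-(Y^1+Y^2)$, and the stability of nonnegative supermartingales under addition and under $\wedge$. In particular, one need not invoke the general Riesz-space fact as a black box, since the finite-supremum argument above is fully self-contained; the ``main obstacle'', such as it is, was already cleared in the proof of Lemma \ref{lemma3.1}.
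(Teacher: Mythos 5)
Your proposal is correct and follows essentially the same route as the paper: everything rests on Lemma \ref{lemma3.1} (via Rao decomposition), and the interpolation property is then the standard fact about Riesz spaces, which the paper simply cites from Riesz (1940) while you prove it directly by taking $Z=\bigvee_i X^i$ and noting each $Y^j$ is an upper bound of $E$. That explicit two-line verification (and the observation that the ``in particular'' case is trivial, e.g.\ $Z=X$) is a harmless unpacking of the cited result, not a different approach.
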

\begin{proof}
Riesz (1940) showed that every Riesz space has the interpolation property. Hence, Theorem \ref{theorem3.1} follows from Lemma \ref{lemma3.1}.
\end{proof}

Riesz (1940) also showed that interpolation property is equivalent to the following dominated decomposition property.

\begin{corollary}[Dominated decomposition property of quasimartingales]\label{corollary3.1}
Let $X, X^1, ..., X^n$ be quasimartingales with $|X|\leq |X^1+...+X^n|$. Then there exist $n$ quasimartingales
$Y^1, ..., Y^n$ such that $|X^k|\leq |Y^k|$ for each $1\leq k\leq n$ and $X=Y^1+...+Y^n$. In particular, if $X$ is nonnegative, then $Y^1, ..., Y^n$ can be chosen to be nonnegative.
\end{corollary}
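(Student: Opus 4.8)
The plan is to derive Corollary~\ref{corollary3.1} from Theorem~\ref{theorem3.1} by invoking the classical equivalence, due to Riesz, between the interpolation property (also called the Riesz decomposition property in this context) and the dominated/Riesz decomposition property in an arbitrary ordered vector space. Since Lemma~\ref{lemma3.1} shows $\mathcal{Q}$ is a Riesz space and Theorem~\ref{theorem3.1} records that it therefore has the interpolation property, the abstract result applies verbatim to $\mathcal{Q}$ and yields the claimed conclusion. So the proof is essentially a citation, with the only genuine work being to make sure the statement we want matches the standard formulation.

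Concretely, I would proceed as follows. First, recall the standard Riesz decomposition property: in an ordered vector space $X$ with the interpolation property, if $0\le z\le x^1+\cdots+x^n$ with each $x^k\ge 0$, then there exist $z^1,\dots,z^n\ge 0$ with $0\le z^k\le x^k$ and $z=z^1+\cdots+z^n$. Second, I would reduce the stated corollary to this canonical form. Given quasimartingales $X, X^1,\dots,X^n$ with $|X|\le|X^1+\cdots+X^n|$, work in the Riesz space $\mathcal{Q}$: split $X=X^+-X^-$ into its positive and negative parts (which lie in $\mathcal{Q}$ since $\mathcal{Q}$ is a lattice), and observe that $X^+\le|X|\le|X^1+\cdots+X^n|\le|X^1|+\cdots+|X^n|$, with the $|X^k|$ nonnegative elements of $\mathcal{Q}$. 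Apply the Riesz decomposition property to get $X^+=U^1+\cdots+U^n$ with $0\le U^k\le|X^k|$; do the same for $X^-$ to get $X^-=V^1+\cdots+V^n$ with $0\le V^k\le|X^k|$; then set $Y^k=U^k-V^k$, so that $X=\sum_k Y^k$ and $|Y^k|\le U^k\vee V^k\le|X^k|$. Third, for the nonnegativity addendum, if $X\ge0$ then $X^-=0$, so $V^k=0$ and $Y^k=U^k\ge0$.

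I would present this as a short paragraph: cite Riesz~(1940) for the equivalence of the interpolation property with the dominated decomposition property, note that $\mathcal{Q}$ satisfies the interpolation property by Theorem~\ref{theorem3.1}, and then spell out the positive/negative part decomposition just sketched to match the exact phrasing of the corollary, finishing with the nonnegative case. The main obstacle — and it is minor — is bookkeeping around absolute values versus one-sided inequalities: the classical statement is phrased for the positive cone, so one must pass through $X=X^+-X^-$ and verify that the bounds $|Y^k|\le|X^k|$ follow from $0\le U^k,V^k\le|X^k|$ via $|U^k-V^k|\le U^k\vee V^k$, which holds because $U^k,V^k\ge0$. No deeper difficulty arises, since all lattice operations are available in $\mathcal{Q}$ by Lemma~\ref{lemma3.1}.
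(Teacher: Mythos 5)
Your proposal is correct and follows essentially the same route as the paper, whose entire proof of the corollary is the remark that Riesz (1940) established the equivalence of the interpolation property with the dominated decomposition property, applied to the Riesz space $\mathcal{Q}$ of Lemma~\ref{lemma3.1}; your reduction via $X=X^{+}-X^{-}$, the triangle inequality, and $Y^{k}=U^{k}-V^{k}$ merely makes that citation explicit and is sound. One remark: you prove the standard bound $|Y^{k}|\leq |X^{k}|$, whereas the corollary as printed asserts $|X^{k}|\leq |Y^{k}|$; the printed inequality is evidently a misprint (it already fails for $n=1$, e.g.\ $X=0$ and $X^{1}\neq 0$), so the version you establish is the intended one.
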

\noindent \textbf{Remark.} If $\mathcal{C}$ was the standard cone $\mathcal{Q}_+$, then Theorem \ref{theorem3.1} would imply the interpolation property for nonnegative supermartingales. However, this is not the case. Therefore, Theorem \ref{theorem3.1} only guarantees that $Z$ is the difference of two nonnegative supermartingales even if $X$ and $Y$ are both nonnegative supermartingales. We call such a property the \emph{weak interpolation property} of nonnegative supermartingales.
Precisely a subset $S$ of an ordered vector space $X$ is said to satisfy the \emph{weak interpolation property} if for every pair of nonempty finite subsets $E$ and $F$ of $S$ there exists a vector $x\in \langle S\rangle$ such that $E\leq x\leq F$. Indeed, this weak interpolation property holds for supermartingales as we shall see next.\\

\subsection{The weak interpolation property of supermartingales and local supermartingles}
Recall that $\mathcal{K}$ denotes the space of supermartingales, i.e.,
\begin{equation}
\mathcal{K}=\{X\mid X \text{ is a supermartingale}\}. \nonumber
\end{equation}
Clearly, $\mathcal{K}$ is closed under addition and multiplication of nonnegative scalars. However, $\mathcal{K}$ is not a vector space. Therefore, we will look at $\mathcal{L}=\langle \mathcal{K}\rangle$ the vector space generated by $\mathcal{K}$. It is easy see that
\begin{equation*}
\mathcal{L}=\mathcal{K}-\mathcal{K}=\{X^1+X^2\mid X^1 \text{\ is a supermartingale and $X^2$ is a submartingale}\}.
\end{equation*}

\begin{lemma}\label{lemma3.2}
$\mathcal{L}$ is a Riesz space.
\end{lemma}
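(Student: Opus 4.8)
The plan is to mimic the proof of Lemma \ref{lemma3.1}, with the class of supermartingales playing the role that nonnegative supermartingales played there. First I would note that $\mathcal{L}$ is an ordered vector space: it is a vector space by construction, being the linear span $\langle\mathcal{K}\rangle$, and the usual pointwise (almost sure) order is compatible with the vector space operations. Hence it remains only to show that $\mathcal{L}$ is closed under the lattice operation $\wedge$; closure under $\vee$ then follows automatically since $X\vee Y=-\big((-X)\wedge(-Y)\big)$.

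The key preliminary observation I would isolate is that $\mathcal{K}$ itself is closed under finite infima: if $U$ and $V$ are supermartingales, so is $U\wedge V$. Indeed $U\wedge V$ is adapted and c$\acute{a}$dl$\acute{a}$g, since the pointwise minimum of two adapted c$\acute{a}$dl$\acute{a}$g processes is again adapted and c$\acute{a}$dl$\acute{a}$g; it is integrable because $|U\wedge V|\leq|U|+|V|$; and for $s\leq t$ we have $E[(U\wedge V)_t\mid\mathcal{F}_s]\leq E[U_t\mid\mathcal{F}_s]\leq U_s$ and likewise $E[(U\wedge V)_t\mid\mathcal{F}_s]\leq V_s$, so that $E[(U\wedge V)_t\mid\mathcal{F}_s]\leq U_s\wedge V_s=(U\wedge V)_s$.

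With this in hand, I would take two arbitrary elements $X^1-Y^1$ and $X^2-Y^2$ of $\mathcal{L}=\mathcal{K}-\mathcal{K}$, where $X^1,Y^1,X^2,Y^2$ are supermartingales, and apply the same algebraic identity as in Lemma \ref{lemma3.1}:
\begin{eqnarray*}
(X^1-Y^1)\wedge(X^2-Y^2) &=& -Y^2+(X^1-Y^1+Y^2)\wedge X^2\\
 &=& (X^1+Y^2)\wedge(Y^1+X^2)-(Y^1+Y^2).
\end{eqnarray*}
Now $X^1+Y^2$, $Y^1+X^2$ and $Y^1+Y^2$ are supermartingales, and by the closure property just established so is $(X^1+Y^2)\wedge(Y^1+X^2)$; hence the right-hand side is a difference of two supermartingales, i.e. an element of $\mathcal{L}$. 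This proves $\mathcal{L}$ is a Riesz space.

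I do not anticipate a real obstacle. Unlike Lemma \ref{lemma3.1}, we do not even need the Rao decomposition here, because $\mathcal{K}$ is already a wedge that is closed under $\wedge$. The only thing demanding a (routine) check is precisely the stability of the supermartingale property under pointwise minimum --- integrability, the c$\acute{a}$dl$\acute{a}$g property, and the conditional-expectation inequality --- after which the conclusion follows from the very identity used in the previous subsection.
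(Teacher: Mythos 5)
Your proof is correct and follows essentially the same route as the paper: the same algebraic identity reduces the problem to the fact that the minimum of two supermartingales is a supermartingale, the only cosmetic difference being that the paper writes elements of $\mathcal{L}$ as a supermartingale plus a submartingale rather than a difference of two supermartingales. If anything, you are slightly more careful, since you verify explicitly (adaptedness, the c$\acute{a}$dl$\acute{a}$g property, integrability, and the conditional-expectation inequality) the closure of supermartingales under $\wedge$, which the paper simply asserts.
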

\begin{proof}
It is ready to verify that $\mathcal{L}$ is an ordered vector space. Thus, it remains to show that $\mathcal{L}$ is closed under the lattice operation $\wedge$. To this end, let $X^1+X^2$ and $Y^1+Y^2$ be two elements in $\mathcal{L}$, that is, $X^1$ and $Y^1$ are supermartingales and $X^2$ and $Y^2$ are submartingales. Notice that
\begin{eqnarray*}
& & (X^1+X^2)\wedge (Y^1+Y^2)\\
&=& X^2+X^1\wedge(Y^1+Y^2-X^2)\\
&=& (X^1-Y^2)\wedge(Y^1-X^2)+(X^2+Y^2).
\end{eqnarray*}
Since $X^1-Y^2$ and $Y^1-X^2$ are both supermartingales by hypothesis, so is $(X^1-Y^2)\wedge(Y^1-X^2)$. By hypothesis, $X^2+Y^2$ is a submartingale. Therefore, $(X^1+X^2)\wedge (Y^1+Y^2)\in\mathcal{L}$. This shows that $\mathcal{L}$ is a Riesz space.
\end{proof}

\begin{theorem}[Weak interpolation property of supermartingales]\label{theorem3.2}
The class of supermartingales satisfies the weak interpolation property, that is, for two supermartingales $X$ and $Y$ with $X\leq Y$, there exists a difference of two supermartingales $Z$ such that $X\leq Z\leq Y$.
\end{theorem}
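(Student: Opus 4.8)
The plan is to reduce Theorem~\ref{theorem3.2} to Lemma~\ref{lemma3.2} together with Riesz's classical interpolation theorem, exactly in the spirit of how Theorem~\ref{theorem3.1} followed from Lemma~\ref{lemma3.1}. By Lemma~\ref{lemma3.2}, $\mathcal{L}=\mathcal{K}-\mathcal{K}=\langle\mathcal{K}\rangle$ is a Riesz space, and by Riesz (1940) every Riesz space has the interpolation property. So, given supermartingales $X$ and $Y$ with $X\leq Y$, we may regard $X$ and $Y$ as elements of $\mathcal{L}$ and apply the interpolation property to the singleton sets $E=\{X\}$ and $F=\{Y\}$: there is $Z\in\mathcal{L}$ with $X\leq Z\leq Y$. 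Since $\mathcal{L}=\mathcal{K}-\mathcal{K}$, the element $Z$ is, by definition, a difference of two supermartingales, which is precisely the assertion.

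More explicitly, I would spell out why $Z\in\mathcal{L}$ really is a difference of two supermartingales: writing $Z=Z^1+Z^2$ with $Z^1$ a supermartingale and $Z^2$ a submartingale (the description of $\mathcal{L}$ recorded just before Lemma~\ref{lemma3.2}), we have $Z=Z^1-(-Z^2)$ where $-Z^2$ is a supermartingale, so $Z$ is a difference of the two supermartingales $Z^1$ and $-Z^2$. This matches the phrasing ``a difference of two supermartingales $Z$'' in the statement. It is also worth noting that this argument gives the full weak interpolation property in the sense defined in the Remark: for arbitrary nonempty finite families $E,F\subset\mathcal{K}$ with $E\leq F$ one obtains $Z\in\langle\mathcal{K}\rangle$ with $E\leq Z\leq F$; the two-supermartingale version in the theorem is the special case of singletons.

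There is essentially no hard analytic step here — the content has already been discharged in Lemma~\ref{lemma3.2}, whose proof hinges on the one genuinely clever identity, namely rewriting $(X^1+X^2)\wedge(Y^1+Y^2)$ as $(X^1-Y^2)\wedge(Y^1-X^2)+(X^2+Y^2)$ so that the supermartingale parts are combined with the submartingale parts and one uses that a minimum of two supermartingales is a supermartingale. If I were to anticipate the only place a careful reader might want reassurance, it is this: the minimum of two supermartingales is again a supermartingale (immediate from $E[X_t\wedge Y_t\mid\mathcal{F}_s]\leq E[X_t\mid\mathcal{F}_s]\wedge E[Y_t\mid\mathcal{F}_s]\leq X_s\wedge Y_s$), and correspondingly the sum of two submartingales is a submartingale; both facts are used in Lemma~\ref{lemma3.2} and both are standard. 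Given that, the proof of Theorem~\ref{theorem3.2} is a one-line invocation of Lemma~\ref{lemma3.2} and the Riesz interpolation theorem.

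\begin{proof}
By Lemma~\ref{lemma3.2}, $\mathcal{L}=\langle\mathcal{K}\rangle$ is a Riesz space, and by Riesz (1940) every Riesz space satisfies the interpolation property. Let $X$ and $Y$ be supermartingales with $X\leq Y$; then $X,Y\in\mathcal{L}$, and applying the interpolation property to the singletons $\{X\}$ and $\{Y\}$ yields $Z\in\mathcal{L}$ with $X\leq Z\leq Y$. Since $\mathcal{L}=\mathcal{K}-\mathcal{K}$, the element $Z$ is a difference of two supermartingales. More generally, the same argument applied to arbitrary nonempty finite subsets $E,F\subset\mathcal{K}$ with $E\leq F$ produces $Z\in\langle\mathcal{K}\rangle$ with $E\leq Z\leq F$, so the class of supermartingales satisfies the weak interpolation property.
\end{proof}
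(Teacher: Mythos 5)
Your proof is correct and is exactly the argument the paper intends: the paper states Theorem~\ref{theorem3.2} without a written proof, leaving it to follow from Lemma~\ref{lemma3.2} together with Riesz's theorem that every Riesz space has the interpolation property, just as Theorem~\ref{theorem3.1} followed from Lemma~\ref{lemma3.1}. Your additional remarks (why $Z\in\mathcal{L}$ is a difference of supermartingales, and the finite-set version) are consistent with the paper's Remark and add no divergence in method.
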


\begin{corollary}[Dominated decomposition property of differences of supermartingales]\label{corollary3.1}
Let $X, X^1, ..., X^n$ each be a difference of supermartingales with $|X|\leq |X^1+...+X^n|$. Then there exist $n$ differences of supermartingales $Y^1, ..., Y^n$ such that $|X^k|\leq |Y^k|$ for each $1\leq k\leq n$ and $X=Y^1+...+Y^n$. In particular, if $X$ is nonnegative, then $Y^1, ..., Y^n$ can be chosen to be nonnegative.
\end{corollary}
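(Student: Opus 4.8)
The plan is to derive this corollary from Lemma~\ref{lemma3.2} in exactly the way the dominated decomposition property of quasimartingales was derived from Lemma~\ref{lemma3.1}: by invoking Riesz's (1940) result that, in an ordered vector space, the interpolation property is equivalent to the dominated decomposition property. Indeed, $\mathcal{L}=\langle\mathcal{K}\rangle$ is a Riesz space by Lemma~\ref{lemma3.2}, its elements are precisely the differences of two supermartingales, and by Theorem~\ref{theorem3.2} (equivalently, by combining Lemma~\ref{lemma3.2} with Riesz 1940) it has the interpolation property, hence the dominated decomposition property. Applying the latter to $X,X^{1},\dots,X^{n}\in\mathcal{L}$ produces $Y^{1},\dots,Y^{n}\in\mathcal{L}$ --- differences of supermartingales --- with $X=Y^{1}+\dots+Y^{n}$ and $|Y^{k}|\le|X^{k}|$ for every $k$, which is the claim.

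For a self-contained account I would reproduce the short passage from interpolation to decomposition rather than merely cite it. First I would establish the positive form: in a Riesz space with the interpolation property, if $0\le u\le v_{1}+\dots+v_{n}$ with each $v_{i}\ge 0$, then $u=u_{1}+\dots+u_{n}$ with $0\le u_{i}\le v_{i}$. For $n=2$ this is the interpolation property applied to the pair of finite sets $\{0,\ u-v_{2}\}$ and $\{u,\ v_{1}\}$: the four inequalities $0\le u$, $0\le v_{1}$, $u-v_{2}\le u$, $u-v_{2}\le v_{1}$ hold (the last because $u\le v_{1}+v_{2}$), so there is $u_{1}$ with $0\vee(u-v_{2})\le u_{1}\le u\wedge v_{1}$, and then $u_{2}:=u-u_{1}$ satisfies $0\le u_{2}\le v_{2}$; the general $n$ follows by induction on $n$, grouping $v_{2}+\dots+v_{n}$. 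Then I would pass to the signed case: given $X,X^{1},\dots,X^{n}\in\mathcal{L}$ with $|X|\le|X^{1}+\dots+X^{n}|\le|X^{1}|+\dots+|X^{n}|$, I apply the positive form to $u=X^{+}$ and to $u=X^{-}$ --- both are dominated by $\sum_{k}|X^{k}|$, and each $|X^{k}|$ lies in $\mathcal{L}$ since $\mathcal{L}$ is a Riesz space --- obtaining $X^{+}=\sum_{k}a^{k}$ and $X^{-}=\sum_{k}b^{k}$ with $0\le a^{k},b^{k}\le|X^{k}|$. Then $Y^{k}:=a^{k}-b^{k}$ gives $X=\sum_{k}Y^{k}$ and $|Y^{k}|\le a^{k}\vee b^{k}\le|X^{k}|$, with $Y^{k}\in\mathcal{L}$; if moreover $X\ge 0$ then $X^{-}=0$, so each $b^{k}=0$ and $Y^{k}=a^{k}\ge 0$.

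I do not expect a genuine obstacle, since the substance is already contained in Lemma~\ref{lemma3.2} and Theorem~\ref{theorem3.2}. The two points worth a line of care are: that the lattice operations, and hence the absolute values, on $\mathcal{L}$ are the usual pointwise ones --- which is precisely what the identity displayed in the proof of Lemma~\ref{lemma3.2} shows --- so that the Riesz-space statements ``$X=Y^{1}+\dots+Y^{n}$'' and ``$|Y^{k}|\le|X^{k}|$'' are exactly the pointwise statements of the corollary; and that $|X^{k}|$, being the absolute value of an element of the Riesz space $\mathcal{L}$, is again a difference of supermartingales, so that the positive decomposition property may legitimately be invoked with the dominating terms $v_{k}=|X^{k}|$.
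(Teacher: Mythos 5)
Your proposal is correct and takes essentially the paper's own route: the paper states this corollary with no further argument, relying exactly on Lemma~\ref{lemma3.2} (that $\mathcal{L}=\langle\mathcal{K}\rangle$ is a Riesz space whose elements are the differences of supermartingales) together with Riesz's (1940) equivalence of the interpolation and dominated decomposition properties, and your spelled-out passage from interpolation to the decomposition (first for $0\le u\le v_1+\dots+v_n$, then the signed case via $X^+$, $X^-$) is sound. One remark: what you actually prove is the standard form $|Y^k|\le |X^k|$, whereas the statement as printed reads $|X^k|\le |Y^k|$; the printed direction is evidently a typo (it already fails for $n=1$ with $X=0$ and $X^1\neq 0$), so your version is the intended one.
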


Finally, we show that the class of local supermartingales satisfies the weak interpolation property too. Recall that we say an adapted c$\acute{a}$dl$\acute{a}$g process $M$ is a \emph{local supermartingale} if there exists an increasing sequence of stopping times $(T_n)$ such that $T_n\uparrow \infty$ and $M_{t\wedge T_n}$ is a uniform integrable supermartingale for each $n$. Put
\begin{equation*}
\mathcal{K}_{loc}=\{M \mid M\ \text{is a local supermartingale}\}.
\end{equation*}
Evidently, $\mathcal{K}_{loc}$ is a wedge but not a cone. Thus, we consider the vector space generated by it, i.e.,
\begin{equation*}
\mathcal{L}_{loc}=\mathcal{K}_{loc}-\mathcal{K}_{loc}.
\end{equation*}

\begin{lemma}\label{lemma3.3}
The space $\mathcal{L}_{loc}$ is a Riesz space.
\end{lemma}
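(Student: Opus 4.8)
The plan is to mimic exactly the proofs of Lemma~\ref{lemma3.1} and Lemma~\ref{lemma3.2}, since the only new ingredient needed is that the class $\mathcal{K}_{loc}$ of local supermartingales is closed under the operation $\wedge$ and under the relevant algebraic manipulations. First I would record the easy structural facts: $\mathcal{L}_{loc}=\mathcal{K}_{loc}-\mathcal{K}_{loc}$ is a vector space, and the usual pointwise order makes it an ordered vector space, so the whole content of the lemma is that $\mathcal{L}_{loc}$ is closed under $\wedge$. As in the previous two lemmas, it then suffices to take two elements $X^1-Y^1$ and $X^2-Y^2$ of $\mathcal{L}_{loc}$ with $X^1,Y^1,X^2,Y^2\in\mathcal{K}_{loc}$ and to use the algebraic identity
\begin{eqnarray*}
(X^1-Y^1)\wedge (X^2-Y^2) &=& (X^1+Y^2)\wedge (Y^1+X^2) -(Y^1+Y^2),
\end{eqnarray*}
so that everything reduces to showing (i) $\mathcal{K}_{loc}$ is closed under addition, and (ii) if $U,V\in\mathcal{K}_{loc}$ then $U\wedge V\in\mathcal{K}_{loc}$.

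For (i), if $U$ and $V$ are local supermartingales with reducing sequences $(S_n)$ and $(R_n)$, then $T_n=S_n\wedge R_n\uparrow\infty$ and $(U+V)_{t\wedge T_n}=U_{t\wedge T_n}+V_{t\wedge T_n}$ is a sum of two uniformly integrable supermartingales stopped at $T_n$, hence a uniformly integrable supermartingale; so $U+V\in\mathcal{K}_{loc}$. For (ii), with the same common reducing sequence $T_n$, one has $(U\wedge V)_{t\wedge T_n}=U_{t\wedge T_n}\wedge V_{t\wedge T_n}$, which is the minimum of two uniformly integrable supermartingales; the minimum of two supermartingales is a supermartingale (this is classical — it follows from $E[U_s\wedge V_s\mid\mathcal{F}_t]\le E[U_s\mid\mathcal{F}_t]\wedge E[V_s\mid\mathcal{F}_t]\le U_t\wedge V_t$ for $s\ge t$), and uniform integrability of the stopped process is preserved because $|U\wedge V|\le |U|\vee|V|\le |U|+|V|$ and a dominated family of a uniformly integrable family is uniformly integrable. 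Hence $U\wedge V\in\mathcal{K}_{loc}$, and combining with (i) and the identity above gives $(X^1-Y^1)\wedge(X^2-Y^2)\in\mathcal{K}_{loc}-\mathcal{K}_{loc}=\mathcal{L}_{loc}$.

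The main obstacle, such as it is, is the care needed around the localizing sequences: one must pass to a common reducing sequence $S_n\wedge R_n$ (still increasing to $\infty$), and one must check that stopping commutes with $\wedge$ and $+$ at the level of paths, i.e. $(U\wedge V)^{T_n}=U^{T_n}\wedge V^{T_n}$ and $(U+V)^{T_n}=U^{T_n}+V^{T_n}$, which is immediate pathwise. The remaining subtlety is the uniform integrability claim in (ii): one should note that for a fixed $n$, $\{U_{t\wedge T_n}:t\ge 0\}$ and $\{V_{t\wedge T_n}:t\ge 0\}$ are each uniformly integrable, hence so is their union, hence so is the dominated family $\{U_{t\wedge T_n}\wedge V_{t\wedge T_n}:t\ge 0\}$. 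With these points in place the proof is complete, and it runs parallel to the earlier two lemmas.
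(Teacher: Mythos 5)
Your proof is correct and follows essentially the same route as the paper: pass to a common localizing sequence by taking the minimum of the individual reducing stopping times, use the translation identity for $\wedge$ to reduce everything to closure of $\mathcal{K}_{loc}$ under addition and under $\wedge$, and conclude via the facts that the minimum of two supermartingales is a supermartingale and that stopping preserves the uniformly integrable supermartingale property (the paper cites Doob's optional sampling theorem for this, while you add the domination argument $|U\wedge V|\le|U|+|V|$ for uniform integrability). The only cosmetic difference is that you write elements of $\mathcal{L}_{loc}$ as differences of local supermartingales, whereas the paper writes them as a local supermartingale plus a local submartingale.
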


\begin{proof}
It is evident that $\mathcal{L}_{loc}$ is an ordered vector space. To see that $\mathcal{L}_{loc}$ is closed under the lattice operation $\wedge$, take two elements $M^1+M^2$ and $N^1+N^2$ in $\mathcal{L}_{loc}$, where $M^1$ and $N^1$ are local supermartingales and $M^2$ and $N^2$ are local submartingales. Then for $i=1, 2$, we can choose increasing sequences of stopping times $(S^i_n)$ and $(T^i_n)$ such that $S^i_n\uparrow \infty$, $T^i_n\uparrow \infty$, and $M^i_{t\wedge S_n}$ and $-N^i_{t\wedge T_n}$ are all uniformly integrable supermartingales for each $n$. Put $U_n= S^1_n\wedge S^2_n\wedge T^1_n\wedge T^2_n$. Then $(U_n)$ is an increasing sequence of stopping times such that $U_n\uparrow \infty$. Also, we have
\begin{equation*}
[(M^1+M^2)\wedge (N^1+N^2)]_{t\wedge U_n}=[(M^1-N^2)\wedge(N^1-M^2)]_{t\wedge U_n}+(M^2+N^2)_{t\wedge U_n}.
\end{equation*}
It follows from Doob's optional sampling theorem that $(M^1-N^2)_{t\wedge U_n}$ is a uniformly integrable supermartingale fore each $n$. Likewise, $(N^1-M^2)_{t\wedge U_n}$ and $(M^2+N^2)_{t\wedge U_n}$ are uniformly integrable supermartingale for each $n$. It follows that $(M^1+M^2)\wedge (N^1+N^2)$ is a local supermartingale. Hence, $\mathcal{L}_{loc}$ is a Riesz space.
\end{proof}

\begin{theorem}[Weak interpolation property of local martingales]\label{theorem3.3}
The class of local martingales satisfies the weak interpolation property, that is, for two local martingales $M$ and $N$ with $M\leq N$, there is a local supermartingale $X$ such that $M\leq X\leq N$.
\end{theorem}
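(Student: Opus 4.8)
The plan is to prove the statement in the strong form in which it is written: the interpolant $X$ should be an actual local supermartingale, a member of $\mathcal{K}_{loc}$, and not merely a difference of local supermartingales lying in $\mathcal{L}_{loc}=\langle\mathcal{K}_{loc}\rangle$. The route through Riesz (1940) and Lemma~\ref{lemma3.3} would only deliver an element of $\mathcal{L}_{loc}$, so I would not use it here. Instead I would exploit the one-sided lattice structure of $\mathcal{K}_{loc}$: the class of local supermartingales, while not a sublattice of $\mathcal{L}_{loc}$, is closed under finite infima, and this alone produces an interpolant inside $\mathcal{K}_{loc}$.

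The key step I would establish first is that the pointwise minimum $L^1\wedge L^2$ of two local supermartingales is again a local supermartingale. The argument is the localization already used in Lemma~\ref{lemma3.3}: pick increasing sequences $(S_n)$ and $(T_n)$ of stopping times with $S_n\uparrow\infty$ and $T_n\uparrow\infty$ reducing $L^1$ and $L^2$ to uniformly integrable supermartingales, put $U_n=S_n\wedge T_n$, and use Doob's optional sampling theorem so that $L^1_{t\wedge U_n}$ and $L^2_{t\wedge U_n}$ are uniformly integrable supermartingales. Two elementary observations then close the step: for supermartingales $A$ and $B$,
\begin{equation*}
E\bigl[(A\wedge B)_t \mid \mathcal{F}_s\bigr] \leq E[A_t\mid\mathcal{F}_s]\wedge E[B_t\mid\mathcal{F}_s] \leq A_s\wedge B_s = (A\wedge B)_s,
\end{equation*}
so $A\wedge B$ is a supermartingale, and $|A\wedge B|\leq|A|+|B|$, so the minimum of two uniformly integrable families is uniformly integrable. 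Hence $(L^1\wedge L^2)_{t\wedge U_n}$ is a uniformly integrable supermartingale for every $n$, giving $L^1\wedge L^2\in\mathcal{K}_{loc}$; induction then shows that $\mathcal{K}_{loc}$ is closed under the infimum of any finite family.

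Granting this, the interpolation is immediate. For the weak interpolation property in its defining finite-set form, I would take nonempty finite sets $E=\{M_1,\dots,M_p\}$ and $F=\{N_1,\dots,N_q\}$ in $\mathcal{K}_{loc}$ with $M_i\leq N_j$ for all $i,j$, and set $X=N_1\wedge\cdots\wedge N_q$. By the key step $X\in\mathcal{K}_{loc}$; from $X\leq N_j$ for each $j$ I get $X\leq F$, and from $M_i\leq N_j$ for each $j$ I get $M_i\leq\bigwedge_j N_j=X$, hence $E\leq X$. Specializing to the singletons $E=\{M\}$ and $F=\{N\}$ with $M\leq N$ then yields a local supermartingale $X$ with $M\leq X\leq N$, which is exactly the asserted conclusion.

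The whole weight of the proof rests on the key step, so that is where I expect the main obstacle: passing the elementary fact that the minimum of two supermartingales is a supermartingale through a common localizing sequence in order to conclude that the minimum of two local supermartingales is a local supermartingale. Once closure of $\mathcal{K}_{loc}$ under finite infima is secured, choosing the lower envelope $\bigwedge F$ as the interpolant keeps it inside $\mathcal{K}_{loc}$, which is precisely the point that the Riesz-space argument of Lemma~\ref{lemma3.3} misses.
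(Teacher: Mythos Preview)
Your proof is correct and takes a genuinely different route from the paper. The paper's argument for Theorem~\ref{theorem3.3} is implicit but follows the pattern of Theorems~\ref{theorem3.1} and~\ref{theorem3.2}: invoke Lemma~\ref{lemma3.3} to see that $\mathcal{L}_{loc}=\mathcal{K}_{loc}-\mathcal{K}_{loc}$ is a Riesz space, then cite Riesz (1940) to obtain the interpolation property there. That route produces an interpolant only in $\mathcal{L}_{loc}$, which is exactly what the paper's definition of \emph{weak} interpolation demands. You instead prove directly that $\mathcal{K}_{loc}$ is closed under finite infima---via a common localizing sequence and the elementary fact that the pointwise minimum of two uniformly integrable supermartingales is again one---and take $X=\bigwedge F$. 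This is more elementary (no appeal to Riesz's 1940 theorem) and strictly stronger: your interpolant lies in $\mathcal{K}_{loc}$ itself, matching the literal wording of the theorem (``there is a local supermartingale $X$''), whereas the Riesz-space route only guarantees a difference of local supermartingales. It is also worth noting, as you implicitly do, that the two-element version stated in the theorem is trivial under any approach since $X=N$ already works; the content lies in the finite-set form, which your lower-envelope construction handles cleanly.
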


\begin{corollary}[Dominated decomposition property of difference of local supermartingales]\label{corollary3.1}
Let $X, X^1, ..., X^n$ each be a difference of local supermartingales with $|X|\leq |X^1+...+X^n|$. Then there exist $n$ differences of local supermartingales $Y^1, ..., Y^n$ such that $|X^k|\leq |Y^k|$ for each $1\leq k\leq n$ and $X=Y^1+...+Y^n$. In particular, if $X$ is nonnegative, then $Y^1, ..., Y^n$ can be chosen to be nonnegative.
\end{corollary}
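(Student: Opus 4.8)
The plan is to work entirely inside the Riesz space $\mathcal{L}_{loc}$, whose lattice structure is supplied by Lemma \ref{lemma3.3}; the only structural fact I will use is that $\mathcal{L}_{loc}$ is a vector lattice, so that the absolute value $|Z|=Z\vee(-Z)$ of any $Z\in\mathcal{L}_{loc}$ again belongs to $\mathcal{L}_{loc}$. Because I will build each $Y^k$ as an explicit algebraic combination of $X,X^1,\dots,X^n$ and their absolute values, membership in $\mathcal{L}_{loc}$ will be automatic and no further appeal to the probabilistic definition of local supermartingales is needed. The first step is to reduce the number of active components: for $k\geq 3$ simply set $Y^k=X^k$, so that $|X^k|=|Y^k|$ and the required inequality holds trivially for those indices. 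Writing $W=X-\sum_{k\geq 3}X^k\in\mathcal{L}_{loc}$, the problem collapses to the case $n=2$, namely to produce $Y^1,Y^2\in\mathcal{L}_{loc}$ with $Y^1+Y^2=W$, $|X^1|\leq|Y^1|$ and $|X^2|\leq|Y^2|$.

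The key step is an ``inflation'' construction. I would put
\begin{equation*}
Y^1=|X^1|+|X^2|+|W|,\qquad Y^2=W-Y^1,
\end{equation*}
both of which lie in $\mathcal{L}_{loc}$ by Lemma \ref{lemma3.3}. Since $Y^1\geq 0$ and $|X^2|,|W|\geq 0$, one reads off $|Y^1|=Y^1\geq|X^1|$. For the second component, note that $W-|W|\leq 0$, whence $Y^2=(W-|W|)-|X^1|-|X^2|\leq 0$, so that $|Y^2|=(|W|-W)+|X^1|+|X^2|\geq|X^2|$, using $|W|-W\geq 0$ and $|X^1|\geq 0$. Finally $Y^1+Y^2=W$ gives $\sum_{k=1}^{n}Y^k=W+\sum_{k\geq 3}X^k=X$. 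This establishes the decomposition for every $n\geq 2$; it is worth recording that the hypothesis $|X|\leq|X^1+\cdots+X^n|$ is not consumed anywhere in this argument, which is a first sign that the stated domination direction $|X^k|\leq|Y^k|$ behaves quite differently from the classical Riesz decomposition.

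The step I expect to be the genuine obstacle is the final clause, together with the degenerate case $n=1$. When $n=1$ the identity $X=Y^1$ is forced, so $|X^1|\leq|Y^1|$ would require $|X^1|\leq|X|$, which the hypothesis $|X|\leq|X^1|$ does not provide; the assertion is therefore delicate at $n=1$ and should be read for $n\geq 2$. The nonnegativity clause is the real sticking point: if $X\geq 0$ and one additionally demands $Y^k\geq 0$ with $\sum_k Y^k=X$, then $|X^k|\leq|Y^k|=Y^k$ forces $X=\sum_k Y^k\geq\sum_k|X^k|\geq|X^1+\cdots+X^n|\geq|X|=X$, so equality must propagate through the whole chain. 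Consequently the inflation construction, which deliberately produces a nonpositive $Y^2$, cannot be adjusted to yield nonnegative components unless $X=\sum_k|X^k|$. This is where I would concentrate the remaining effort: either by confining the nonnegative assertion to that equality situation, or by making explicit that the clean, hypothesis-driven form of the property is the one carrying the reverse domination, for which the equivalence of the weak interpolation property and the dominated decomposition property due to Riesz (1940) applies directly through Lemma \ref{lemma3.3}.
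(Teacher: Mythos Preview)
Your analysis is sharper than the paper's. The paper gives no explicit proof of this corollary; it is meant to follow immediately from Lemma~\ref{lemma3.3} together with Riesz's (1940) equivalence between the interpolation property and the dominated decomposition property in a Riesz space. But the classical Riesz decomposition property produces the \emph{reverse} inequality $|Y^k|\le |X^k|$, so the corollary as printed carries the domination in the wrong direction; the paper's intended argument actually proves that reversed form.

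Your explicit ``inflation'' construction is a genuinely different route, and it correctly establishes the corollary \emph{as literally written} for $n\ge 2$: each $Y^k$ is a finite lattice--linear combination of elements of $\mathcal{L}_{loc}$, hence lies in $\mathcal{L}_{loc}$ by Lemma~\ref{lemma3.3}, and the inequalities you check are valid pointwise in any vector lattice. Your remark that the hypothesis $|X|\le |X^1+\cdots+X^n|$ is never consumed is precisely the diagnostic that the printed inequality is reversed. Your objections to the boundary cases are also correct: with the inequality as printed, $n=1$ forces $|X^1|\le |X|$, contradicting the hypothesis unless $|X|=|X^1|$; and the nonnegativity clause forces $X=\sum_k|X^k|$ by the chain you wrote, so it fails in general. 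Once the inequality is flipped to $|Y^k|\le |X^k|$, all three assertions (general $n$, the case $n=1$, and the nonnegative addendum) are exactly the standard Riesz decomposition property and follow from Lemma~\ref{lemma3.3} via Riesz (1940), which is what the paper has in mind. In short: your construction is correct for the statement as typeset, your caveats are well founded, and your final paragraph correctly identifies the paper's intended one-line proof.
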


\section{Discussion}
In this paper, we establish the interpolation property and dominated decomposition property for quasimartingales. We also give a weak interpolation property for supermartingale and local supermartingales.
Since the space of martingles is evidently not a Riesz space, the machinery employed in this paper does not seem to yield the interpolation property for martingales, supermartingels, local martingales and semimartingales. Of course, the key result we cited from Riesz (1940) is a sufficient condition. Riesz (1940) and Namioka (1957) each gave examples showing that a space may still has the interpolation property without being a Riesz space. Therefore, these are open problems along this line.

Finally, we remark that the interpolation property and dominated decomposition property both hold for Markov chains. For completeness, we document these results here.

\begin{theorem}[Interpolation property of Markov chains]\label{theorem4.1}
The space of Markov chains satisfies the interpolation property. In particular, for two Markov chains $X$ and $Y$ with $X\leq Y$, there is a Markov chain $Z$ such that $X\leq Z\leq Y$.
\end{theorem}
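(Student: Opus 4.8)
\noindent\emph{Sketch of a proof strategy.} The plan is to mimic the route from Lemma~\ref{lemma3.1} to Theorem~\ref{theorem3.1}: attach to the class of Markov chains a vector space, prove that this vector space is a Riesz space, and then invoke Riesz (1940) that every Riesz space has the interpolation property. Concretely, fix the (discrete) time parameter set, let $\mathcal{M}$ denote the set of Markov chains on $(\Omega,\mathcal{F},(\mathcal{F}_t),P)$ equipped with the pointwise a.s.\ order, and pass to the vector space $\langle\mathcal{M}\rangle$ generated by $\mathcal{M}$. Checking that $\langle\mathcal{M}\rangle$ is an ordered vector space under the inherited order is routine, and the ``in particular'' clause of the theorem is immediate --- for $X\leq Y$ one may simply take $Z=X$ (or $Z=Y$), which is a Markov chain --- so all the content sits in the general finite-family statement, equivalently in showing that $\langle\mathcal{M}\rangle$ is a lattice.

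The step I expect to be the main obstacle is exactly this last point, and it is more delicate here than in Lemmas~\ref{lemma3.1}--\ref{lemma3.3}. In those cases the relevant wedge ($\mathcal{C}$, $\mathcal{K}$, $\mathcal{K}_{loc}$) was already closed under addition, so the generated vector space had the transparent description ``difference of two supermartingales'', and closure under $\wedge$ followed from an algebraic identity together with the elementary fact that a pointwise minimum of two supermartingales is again a supermartingale. For Markov chains neither ingredient is available for free: a sum of two Markov chains is in general not Markov, so $\mathcal{M}$ is not even a wedge and $\langle\mathcal{M}\rangle$ carries no analogous concrete description; and a pointwise minimum of two Markov chains need not be Markov, since the Markov property constrains the whole conditional law given the past rather than being a one-step, $\mathcal{F}_t$-local condition. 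I would therefore first look for an algebraic identity reducing $(X^1-Y^1)\wedge(X^2-Y^2)$ to sums and pointwise minima of Markov chains, and then try to prove the key lemma that the class --- or a suitable subclass of it --- is stable under the minima that arise; if the full class is not stable, one restricts $\mathcal{M}$ to a tractable subclass (for instance, chains on a common state space driven by a common source of randomness, or chains adapted to their own natural filtration) on which this lemma, and hence the Riesz-space property, can be established.

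Once $\langle\mathcal{M}\rangle$ (or the chosen subspace) has been shown to be a Riesz space, Theorem~\ref{theorem4.1} follows from Riesz (1940) verbatim as Theorem~\ref{theorem3.1} followed from Lemma~\ref{lemma3.1}, and the accompanying dominated decomposition property for Markov chains follows from the equivalence between the interpolation property and dominated decomposition established in Riesz (1940).
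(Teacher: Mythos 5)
Your sketch follows the same route as the paper --- attach a vector space to the class of Markov chains, show it is a Riesz space, and invoke Riesz (1940) --- but it stops exactly where the mathematical content lies. For comparison, the paper's entire proof of Theorem~\ref{theorem4.1} is the single sentence that it is ``easy to verify that the space of Markov chains is a Riesz space''; there is no analogue of the explicit identity and closure argument that carry Lemmas~\ref{lemma3.1}--\ref{lemma3.3}. So in outline you and the paper agree, but neither your text nor the paper actually supplies the verification, and in your case the omission is explicit: you only describe where you would \emph{look} for the key lattice-closure lemma.

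The gap is genuine, and your own remarks explain why it cannot be dismissed as routine. The sum of two Markov chains (with respect to the given filtration) is in general not Markov, so the class is not a wedge and the generated vector space $\langle\mathcal{M}\rangle$ admits no description of the form ``difference of two Markov chains''; likewise the pointwise minimum of two Markov chains need not be Markov, because the Markov property constrains the whole conditional law given the past rather than being an inequality-type condition, stable under $\wedge$, like the supermartingale property used in Lemma~\ref{lemma3.1}. Consequently the step ``$\langle\mathcal{M}\rangle$ is a Riesz space'' is not established by your proposal, and your fallback of restricting to a tractable subclass would prove a different theorem, not Theorem~\ref{theorem4.1} as stated. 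The ``in particular'' clause is indeed trivial (take $Z=X$), but the full interpolation property for finite families --- which is what the appeal to Riesz (1940) is for, and what feeds Corollary~\ref{corollary4.1} --- remains unproven. To close the gap you would need either an explicit proof that a precisely defined space containing the Markov chains is closed under the lattice operations, or a direct argument for the interpolation property that bypasses the Riesz-space machinery; your honest identification of the obstacles is valuable, but it also shows that the paper's one-line verification deserves the same scrutiny.
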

\begin{proof}
It is easy to verify that the space of Markov chains is a Riesz space.
\end{proof}

\begin{corollary}[Dominated decomposition property of Markov chains]\label{corollary4.1}
Let $X, X^1, ..., X^n$ be Markov chains with $|X|\leq |X^1+...+X^n|$. Then there exist $n$ Markov chains
$Y^1, ..., Y^n$ such that $|X^k|\leq |Y^k|$ for each $1\leq k\leq n$ and $X=Y^1+...+Y^n$. In particular, if $X$ is nonnegative, then $Y^1, ..., Y^n$ can be chosen to be nonnegative.
\end{corollary}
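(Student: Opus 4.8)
The plan is to derive this corollary from Theorem~\ref{theorem4.1} in exactly the manner that the dominated decomposition property of quasimartingales was derived from Theorem~\ref{theorem3.1}. The pivotal fact is the result of Riesz (1940) already invoked above: in any ordered vector space, the interpolation property is equivalent to the dominated decomposition property. Thus, once we know that the vector space generated by the Markov chains is a Riesz space --- which underlies Theorem~\ref{theorem4.1} --- it automatically has the interpolation property, hence the dominated decomposition property, which is precisely the assertion to be proved.

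In more detail, I would proceed in three steps. First, invoke Theorem~\ref{theorem4.1} to work inside an ordered vector space $\mathcal{M}$ spanned by the Markov chains and possessing the interpolation property. Second, feed the hypothesis $|X|\le|X^1+\cdots+X^n|$ into the Riesz (1940) equivalence, which then furnishes $Y^1,\dots,Y^n\in\mathcal{M}$ with $X=Y^1+\cdots+Y^n$ and $|X^k|\le|Y^k|$ for every $k$; this is the same quotation used earlier for quasimartingales, applied verbatim to $\mathcal{M}$. Third, for the final clause, observe that when $X\ge 0$ one can intersect the decomposition with the positive cone of $\mathcal{M}$ --- equivalently, run the standard lattice argument already used in the quasimartingale case --- to replace $Y^1,\dots,Y^n$ by nonnegative Markov chains without disturbing either $X=\sum_k Y^k$ or the domination $|X^k|\le|Y^k|$.

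The step I expect to be the real obstacle is not the corollary itself, which is purely formal once Theorem~\ref{theorem4.1} is in hand, but rather making the underlying lattice structure precise: one must check that the pointwise infimum of (differences of) Markov chains is again a difference of Markov chains with respect to the usual order, so that $\mathcal{M}$ is closed under $\wedge$. This mirrors the roles played by Lemmas~\ref{lemma3.1}, \ref{lemma3.2} and \ref{lemma3.3} in the earlier sections. Once that closure is established --- so that the ``easy to verify'' assertion behind Theorem~\ref{theorem4.1} is fully justified --- the present statement follows with no further work, exactly as the earlier dominated-decomposition corollaries followed from their respective interpolation theorems.
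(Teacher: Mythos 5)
Your proposal matches the paper's (implicit) argument exactly: the corollary is obtained from Theorem~\ref{theorem4.1} via the Riesz (1940) equivalence between the interpolation property and the dominated decomposition property, just as Corollary~\ref{corollary3.1} was obtained from Theorem~\ref{theorem3.1}, and the paper offers no further proof. You also rightly flag that the only substantive burden is the ``easy to verify'' claim behind Theorem~\ref{theorem4.1} that the (span of the) Markov chains is a Riesz space, which the paper likewise leaves unproved.
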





\vskip 15pt

\end{document}